\newtheorem{theorem}{Theorem}
\newtheorem{lemma}[theorem]{Lemma}
\newtheorem*{interpretation}{Interpretation}
\title{An improved incidence bound for fields of prime order}
\author{Timothy G. F. Jones\footnote{School of Mathematics, University of Bristol BS8 1TW, United Kingdom, tgf.jones@bristol.ac.uk}}
\date{}
\begin{document}

\maketitle
\begin{abstract}
Let $P$ be a set of points and $L$ a set of lines in $\mathbb{F}_p^2$, with $|P|,|L|\leq N$ and $N<p$. We show that $P$ and $L$ generate no more than $C N^{\frac{3}{2}-\frac{1}{806}+o(1)}$ incidences for some absolute constant $C$. This improves by an order of magnitude on the previously best-known bound of $C N^{\frac{3}{2}-\frac{1}{10,678}}$.
\end{abstract}

\section*{Notation}
\subsection*{Asymptotic notation}
If $f$ and $g$ are functions and there is a constant $C$ such that $f(x)\leq C g(x)$ for all $x$ then we will write $f\ll g$, $f=O(g)$, or $g=\Omega(f)$. 

If there is a constant $c$ such that $f \ll g \log^c x$ then we will write $f \lesssim g$, $f=\widetilde{O}(g)$ or $g=\widetilde{\Omega}(f)$.

If $f(x) \lesssim g(x)$ and $g(x) \lesssim f(x)$ then we will write $f(x) \approx g(x)$. 

The implicit constants $C$ and $c$ may vary with different instances of the notation. If one or other of them depends on a parameter $\epsilon$ then this will be indicated by a subscript, e.g. $f=O_{\epsilon}(g)$ etc.
\subsection*{Sumset notation}
If $A$ and $B$ are subsets of a field $F$, then we will write 
$$A+B=\left\{a+b:a \in A, b \in B\right\}.$$
If $G \subseteq A \times B$ then we will write
$$A\stackrel{G}{+}B=\left\{a+b:(a,b)\in G\right\}.$$
These definitions extend analogously to subtraction, multiplication and division.

\section{Introduction}
\subsection{Incidences}
This paper is about counting incidences between points and lines in a plane. A point is \textit{incident} to a line if it lies on that line. Incidences are counted with multiplicity, in the sense that several lines incident to the same point determine several incidences, and vice versa.

We are interested in knowing the maximum number of incidences between a set $P$ of points and a set $L$ of lines, say with $|P|,|L|\leq N$. Certainly this cannot exceed $N^2$. But using the Cauchy-Schwartz inequality and the fact that two distinct points determine a line, it is straightforward to see that it is in fact $O(N^{3/2})$. 

So, writing $I(P,L)$ for the number of incidences between $P$ and $L$, \textit{non-trivial} incidence bounds are of the form $I(P,L)\ll N^{3/2-\epsilon}$ with $\epsilon>0$. The larger the value of $\epsilon$, the stronger the bound. 

How large can $\epsilon$ be? A natural example shows that the best that can be hoped for is $\epsilon = 1/6$. Progress towards achieving this depends on the ambient field over which points and lines are defined. Over $\mathbb{R}$ (i.e. when the points and lines lie in $\mathbb{R}^2$) the best possible result was obtained by Szemer\'edi and Trotter \cite{ST}:

\begin{theorem}[Szemer\'edi-Trotter]\label{theorem:ST}
Let $P$ be a set of points and $L$ a set of lines in $\mathbb{R}^2$ with $|P|,|L|\leq N$. Then $I(P,L)\ll N^{4/3}=N^{\frac{3}{2}-\frac{1}{6}}$.
\end{theorem}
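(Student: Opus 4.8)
The plan is to prove this via Székely's method, which reduces the incidence bound to the \emph{crossing number inequality}: writing $\mathrm{cr}(G)$ for the minimum number of edge crossings over all drawings of a graph $G$ in the plane, this states that any graph on $n$ vertices with $m \geq 4n$ edges satisfies $\mathrm{cr}(G) \gg m^3/n^2$. First I would dispose of the lines incident to at most one point of $P$; collectively these contribute at most $|L| \leq N$ incidences, negligible against the target $N^{4/3}$, so I may assume every line passes through at least two points. I then construct a drawing in $\mathbb{R}^2$ whose vertex set is $P$, joining consecutive points of $P \cap \ell$ along each line $\ell$ by an edge. A line carrying $k_\ell \geq 2$ points contributes $k_\ell - 1 \geq k_\ell/2$ edges, so the number of edges satisfies $m \geq \tfrac{1}{2}\sum_\ell k_\ell$, whence $I(P,L) \ll m + N$, while the number of vertices is $n = |P| \leq N$.

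The crucial geometric input, special to $\mathbb{R}^2$, is that two distinct lines meet in at most one point: edges arising from the same line never cross, and edges from different lines cross at most once. Hence the total number of crossings in this drawing, and so $\mathrm{cr}(G)$, is at most $\binom{|L|}{2} \ll N^2$. To combine, I split into cases. If $m < 4n$ then $I(P,L) \ll m + N \ll N$, which is acceptable. Otherwise the crossing number inequality applies and gives $N^2 \gg \mathrm{cr}(G) \gg m^3/n^2 \geq m^3/N^2$, so that $m^3 \ll N^4$, hence $m \ll N^{4/3}$ and therefore $I(P,L) \ll N^{4/3}$, as required.

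The main obstacle is the crossing number inequality itself, which I would establish in two stages. The linear bound $\mathrm{cr}(G) \geq m - 3n + 6$ follows from Euler's formula, since a planar graph satisfies $m \leq 3n - 6$ and each crossing may be destroyed by deleting one of its two edges. To amplify this to the cubic bound I would pass to a random induced subgraph, retaining each vertex independently with probability $p$; applying the linear bound to the subgraph, taking expectations so that edges survive with probability $p^2$ and crossings with probability $p^4$, and optimising $p \approx n/m$ (which is legitimate precisely because $m \geq 4n$ keeps $p \leq 1$) yields $\mathrm{cr}(G) \gg m^3/n^2$. Both ingredients rely on the topology of the plane, which is why the exponent $1/6$ here is unattainable by the purely combinatorial arguments used over $\mathbb{F}_p$.
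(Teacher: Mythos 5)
The paper does not actually prove this statement: Szemer\'edi--Trotter is quoted as background with a citation to \cite{ST}, so there is no internal proof to compare against. Your argument is the standard Sz\'ekely crossing-number proof, and it is correct in all essentials: the reduction $I(P,L)\ll m+N$ after discarding lines with at most one point, the observation that the straight-line drawing has at most $\binom{|L|}{2}\ll N^2$ crossings because two distinct lines meet at most once (the key Euclidean input, as you rightly emphasise), the case split on $m<4n$, and the probabilistic amplification of the Euler-formula bound to $\mathrm{cr}(G)\gg m^3/n^2$. One small technical point deserves care when you write the amplification step out in full: the inequality $\mathrm{cr}(G)\geq m-3n+6$ is false for graphs with few edges (e.g.\ the empty graph), and $m\leq 3n-6$ itself requires $n\geq 3$; the clean statement to use, valid for \emph{every} graph and hence for every random induced subgraph appearing in the expectation computation, is $\mathrm{cr}(G)\geq m-3n$. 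With $p=4n/m$ this still yields $\mathrm{cr}(G)\geq m^3/(64n^2)$. This is a routine fix, not a gap. It is worth noting that this topological route is entirely unavailable over $\mathbb{F}_p$, which is precisely why the paper must instead pass through sum-product estimates (Lemma \ref{theorem:rudnev}) and the Balog--Szemer\'edi--Gowers-type Lemma \ref{theorem:bg} to obtain the much weaker exponent $\frac{1}{806}$ in Theorem \ref{theorem:result}.
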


This result was generalised to $\mathbb{C}$ by Toth \cite{toth}, and a near-sharp generalisation to higher dimensional points and varieties was recently given by Solymosi and Tao \cite{solymositao}.

When working over finite fields, the known bounds are considerably weaker. Helfgott and Rudnev \cite{HR} obtained:

\begin{theorem}[Helfgott-Rudnev] \label{theorem:HR}
Let $\mathbb{F}_p$ be the finite field of prime order $p$. Let $P$ be a set of points and $L$ a set of lines in $\mathbb{F}_p^2$ with $|P|,|L|\leq N$ and $N<p$. Then $I(P,L)\ll N^{\frac{3}{2}-\frac{1}{10,678}}$. 
\end{theorem}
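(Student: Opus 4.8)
Since the real-field proof of Szemer\'edi--Trotter rests on the crossing-number inequality, which has no analogue over $\mathbb{F}_p$, the plan is instead to route everything through the sum--product phenomenon. The strategy is to argue by contradiction in the following form: a configuration with $I(P,L)\geq N^{3/2-\epsilon}$ and $\epsilon$ too small must secretly contain a set $A\subseteq\mathbb{F}_p$ that is almost closed under both addition and multiplication, which the quantitative sum--product theorem forbids. First I would normalise to $|P|=|L|=N$ and run a dyadic pigeonholing on the number of points per line, passing at the cost of a $\log$-factor to a subfamily $L'$ whose lines are each incident to between $k$ and $2k$ points of $P$ with $k|L'|\gtrsim I(P,L)$; a symmetric restriction isolates the points lying on many lines. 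Two applications of Cauchy--Schwarz then convert the incidence count into control of the number of \emph{collinear quadruples}, i.e. an additive/multiplicative energy of the surviving configuration, which is the quantity that talks directly to sum--products.

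The heart of the matter is to run the Elekes dictionary between incidences and sum--products \emph{in reverse}. After a projective transformation sending a popular point to infinity, so that the lines through it become a pencil of parallels and coordinates can be chosen cleanly, the rich and regular incidence structure extracted above yields a set $A\subseteq\mathbb{F}_p$ together with estimates showing that \emph{both} $|A+A|$ and $|A\cdot A|$ are small, of size $O(K|A|)$, where the multiplicative parameter $K$ is a fixed power of $N^{\epsilon}$. The relation between $K$ and $\epsilon$ is tracked through the Plünnecke--Ruzsa inequalities, which let one pass back and forth between the various iterated sum- and product-sets produced by the energy bound without losing more than a constant power. I expect this extraction step, and the careful bookkeeping of the losses, to be the main obstacle: each Cauchy--Schwarz, each covering argument, and each Plünnecke--Ruzsa transfer degrades the exponent, and it is precisely the accumulation of these multiplicative losses that forces the final deficit to be as small as $1/10678$ rather than anything close to the sum--product exponent one starts from.

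Finally I would invoke the quantitative sum--product theorem in $\mathbb{F}_p$, in the explicit form of Bourgain--Glibichuk--Konyagin or its refinements, which asserts that $\max(|A+A|,|A\cdot A|)\gg |A|^{1+c}$ for an explicit $c>0$ whenever $A$ is not too close to all of $\mathbb{F}_p$. Comparing this genuine growth against the smallness $|A+A|,|A\cdot A|=O(K|A|)$ derived above forces an upper bound on $K$, hence a lower bound on $\epsilon$. Optimising the whole chain of parameters, namely the richness $k$, the constant $K$, and the exponent $c$ supplied by the sum--product input, should then yield $\epsilon\geq 1/10678$, giving the stated bound $I(P,L)\ll N^{3/2-1/10678}$. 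The hypothesis $N<p$ enters exactly here, guaranteeing that the extracted set $A$ stays in the range $|A|<p^{1-\delta}$ where the sum--product estimate provides real growth rather than the trivial saturation bound.
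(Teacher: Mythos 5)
You should first note that the paper does not actually prove Theorem \ref{theorem:HR}: it is quoted from Helfgott and Rudnev \cite{HR} as prior work and used only as a point of comparison. The natural thing to measure your sketch against is the paper's proof of the sharper Theorem \ref{theorem:result}, which follows exactly the sum--product route you propose (refine by dyadic pigeonholing and Cauchy--Schwarz, apply a projective transformation, extract a set with small difference and ratio sets, contradict a quantitative sum--product estimate). At that level your architecture is right, but there are two concrete gaps. The first is geometric: sending a \emph{single} popular point to infinity only tells you that the refined point set is covered by few parallel lines, which gives you one kind of structure, not both additive and multiplicative structure simultaneously. To get both you need a configuration of four rich points $p_1,p_2,p_3,p_4$ with $p_1,p_2,p_3$ collinear and $p_4$ off their common line, each supporting the surviving set $R$ on only $O(K)$ lines; the projective map sends the common line to infinity so that $R$ lands in a grid $A\times B$ (from two of the collinear points), is covered by $O(K)$ lines of a fixed non-degenerate gradient (from the third collinear point, giving a small partial difference set after rescaling), and by $O(K)$ lines through a fixed affine point (from $p_4$, giving a small partial ratio set). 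Finding these four points is the purpose of the two-stage refinement in the paper, and your sketch does not produce them.

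The second gap is the passage from partial to complete sum and product sets. What the geometric step yields is smallness of $A\stackrel{E}{-}B$ and $A\stackrel{E}{/}B$ for some large $E\subseteq A\times B$, not of $|A+A|$ and $|A\cdot A|$. The Pl\"unnecke--Ruzsa inequalities cannot bridge this: they require control of a full sumset $|A+B|$ as a hypothesis, which is exactly what you do not have. The necessary tool is a Balog--Szemer\'edi--Gowers-type graph argument (the paper uses a Bourgain--Garaev variant, Lemma \ref{theorem:bg}, proved by counting paths of length four), which extracts a large $A'\subseteq A$ with $|A'-A'|$ and $|A'/A'|$ controlled by the fourth powers of the partial sets. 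It is this step, together with the double refinement, that accounts for almost all of the degradation from the sum--product exponent down to $1/10{,}678$ (or $1/806$ in the paper); attributing the losses to Pl\"unnecke--Ruzsa bookkeeping misidentifies where the difficulty and the cost actually lie.
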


This followed work of Bourgain, Katz and Tao \cite{BKT}, which established the existence of a non-zero $\epsilon$ so long as $N<p^{2-\delta(\epsilon)}$, but did not quantify it.

The condition $N<p$ in Theorem \ref{theorem:HR} is essentially a nondegeneracy requirement; it prevents $P$ from being the entire plane $\mathbb{F}_p^2$, in which case no nontrivial bound would be possible. The present author \cite{me1} extended Theorem \ref{theorem:HR} to a finite field $\mathbb{F}_q$ of general order, subject to analogous conditions to prevent $P$ from being a large part of a subplane, and with a slightly weaker exponent. Vinh \cite{vinh} also gave a nontrivial incidence bound for sets that are a large part of $\mathbb{F}_q^2$, but still not the entire plane.

\subsection{Relation to sum-product estimates}

Erd\"os and Szemer\'edi conjectured that any finite set $A \subseteq \mathbb{R}$ must satisfy

$$\max\left\{|A+A|,|A\cdot A|\right\}\gg_{\epsilon} |A|^{2-\epsilon}$$

for any $\epsilon>0$. Since sets with small sumset are in some sense additively structured, and similarly for product sets, this asserts that no set can be simultaneously additively and multiplicatively structured. Bounds on $\max\left\{|A+A|,|A\cdot A|\right\}$ are called \textit{sum-product estimates}.

As with incidences, the sum-product question can be just as easily posed over other fields, and the known results depend on the choice of field. Over $\mathbb{R}$ the best-known result is due to Solymosi \cite{solymosi1}, who obtained 
$\max\left\{|A+A|,|A\cdot A|\right\}\gtrsim |A|^{4/3}.$
The best known bound over $\mathbb{F}_p$, due to Rudnev \cite{rudnev}, is $|A|^{12/11}$ so long as $|A|<\sqrt{p}$.

Incidence and sum-product estimates are closely linked. For example the Szemer\'edi-Trotter theorem was used to establish the previously best-known sum-product result over $\mathbb{R}$ of $\max\left\{|A+A|,|A\cdot A|\right\}\gtrsim |A|^{14/11}.$ This was again due to Solymosi \cite{solymosi2} and built on previous work of Elekes \cite{elekes}. In the other direction, the Helfgott-Rudnev and Bourgain-Katz-Tao incidence bounds over $\mathbb{F}_p$ were established using finite field sum-product results and methods.

\subsection{Results}
This paper proves the following theorem, which improves on Theorem \ref{theorem:HR} by an order of magnitude:

\begin{theorem}\label{theorem:result}
Let $\mathbb{F}_p$ be the finite field of prime order $p$. Let $P$ be a set of points and $L$ a set of lines in $\mathbb{F}_p^2$ with $|P|,|L|\leq N$ and $N<p$. Then $I(P,L)\ll N^{\frac{3}{2}-\frac{1}{806}+o(1)}$. 
\end{theorem}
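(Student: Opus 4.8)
The plan is to feed a sharp finite-field sum-product estimate into the point--line machinery, running the incidence--sum-product dictionary in reverse: a hypothetical excess of incidences will be shown to manufacture a set $A \subseteq \mathbb{F}_p$ whose sumset and product set are both abnormally small, contradicting the sum-product phenomenon in the nondegenerate range. Suppose $I(P,L) \geq N^{3/2-\epsilon}$ with $\epsilon$ to be determined. First I would perform the usual reductions: delete points lying on few lines and lines containing few points, and dyadically pigeonhole so that every surviving line meets $\approx k$ of the surviving points. This retains $\gtrsim N^{3/2-\epsilon}$ incidences while making the configuration uniform. After a projective change of coordinates fixing a generic flag, a positive proportion of the lines can be taken in the affine form $y = ax+b$, so that each such line is recorded by its dual datum $(a,b)$.

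Next I would convert the incidence surplus into an energy statement. A pair of lines $y=a_1x+b_1$ and $y=a_2x+b_2$ meets at abscissa $x = (b_1-b_2)/(a_2-a_1)$, so collinearity of many point--line flags forces the ratio set $(B-B)/(A-A)$ (where $A$ and $B$ are the projections of the dual data) to be heavily concentrated; equivalently a mixed additive--multiplicative energy of $A$ and $B$ is large. Counting ordered pairs of lines through a common point, a Cauchy--Schwarz step bounds the number of length-two paths in the incidence graph from below by $\gg I^2/N \gtrsim N^{2-2\epsilon}$, and this lower bound is exactly an energy count of the type above. The point of this step is to replace the geometric incidence problem by a purely arithmetic energy inequality to which the sum-product circle of ideas applies.

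Then I would apply the Balog--Szemer\'edi--Gowers theorem to pass from the statistical largeness of the energy to honest combinatorial structure: there is a substantial subset $A' \subseteq A$ (and correspondingly $B'$) with $|A'+A'|$ and $|A' \cdot A'|$, together with the relevant mixed sum- and product-sets, all bounded by a small power of $N$. Invoking Pl\"unnecke--Ruzsa to tidy up iterated sum- and product-sets, this structure contradicts the sharpest available sum-product and energy estimate over $\mathbb{F}_p$ unless $\epsilon$ is bounded above. Tracking the exponents through the chain and optimizing the free parameters --- the richness level $k$, the split between additive and multiplicative energy, and the Balog--Szemer\'edi--Gowers refinement ratio --- yields the admissible value $\epsilon = 1/806$, with the $o(1)$ absorbing the polylogarithmic Balog--Szemer\'edi--Gowers and dyadic losses.

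The main obstacle, and the source of the explicit constant, is the quantitative bookkeeping in the last step. The Balog--Szemer\'edi--Gowers theorem is lossy, converting energy $\gg N^{2-2\epsilon}$ into a subset of only polynomial-in-$\epsilon$ density, and every such loss feeds back multiplicatively into the final exponent; one must also verify that the extracted set stays within the range where the sum-product input is valid, so that the nondegeneracy hypothesis $N<p$ is genuinely exploited rather than merely assumed. Squeezing $1/806$ out of the argument therefore requires both a genuinely sharp input estimate and an economical reduction that does not waste density at the pigeonholing and energy-extraction stages; balancing these competing losses to optimize the exponent is where essentially all the work lies, whereas the geometric and algebraic reductions preceding it are robust and essentially forced.
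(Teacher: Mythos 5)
Your proposal shares the paper's overall philosophy (assume $I(P,L)\geq N^{3/2-\epsilon}$, extract a structured set, contradict a sharp $\mathbb{F}_p$ sum-product estimate), but the mechanism you offer for extracting that structure does not close, and it is not the paper's mechanism. Your central step dualizes lines to points $(a,b)$ and argues that concurrency of many line pairs at points of $P$ forces a large ``mixed energy'' via the identity $x=(b_1-b_2)/(a_2-a_1)$. The Cauchy--Schwarz count you invoke gives $\sum_{p}d(p)^2\gg I^2/|P|\gtrsim N^{2-2\epsilon}$ pairs of concurrent lines, but since two distinct lines meet in at most one point, this quantity is trivially at most $|L|^2+I\leq 2N^2$, so the lower bound carries no contradiction; equivalently, saying that the $N^{2-2\epsilon}$ ratios $(b_1-b_2)/(a_2-a_1)$ land in a set of size $N$ is no restriction at all at this scale. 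To get anything usable one needs \emph{both} a small partial difference set \emph{and} a small partial ratio set for the \emph{same} subset of a grid $A\times B$, and a single energy/concurrency count cannot produce that.

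What the paper actually does, and what is missing from your sketch, is a two-stage geometric refinement: a first Cauchy--Schwarz/pigeonholing step produces two points $p_1,p_2$ whose common ``neighborhood'' $Q$ is large and covered by $O(K)$ lines through each of them; a second identical step inside $Q$ produces $p_3,p_4$ lying on a common rich line through $p_1$ but not through $p_2$, with a large set $R$ covered by $O(K)$ lines through each of $p_1,p_2,p_3,p_4$. A projective transformation sending the line $p_1p_3p_4$ to infinity (with $p_3,p_4$ going to the horizontal and vertical directions) places $E=\tau(R)$ inside a grid $A\times B$ with $\bigl|A\stackrel{E}{-}B\bigr|,\bigl|A\stackrel{E}{/}B\bigr|\ll K$ simultaneously --- the difference set smallness coming from $p_1$ (parallel lines of fixed gradient) and the ratio set smallness from $p_2$ (concurrent lines through a point). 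Only then does a Bourgain--Garaev variant of Balog--Szemer\'edi--Gowers (for \emph{partial} difference and ratio sets, not sums and products) produce $A'$ with $|A'|\gtrsim N^{1/2-7\epsilon}$ and $|A'-A'|,|A'/A'|\lesssim N^{1/2+29\epsilon}$, and Rudnev's $\max\{|A'-A'|,|A'/A'|\}\gg|A'|^{12/11}$ forces $\epsilon\geq\frac{1}{806}-o(1)$. Your sketch asserts the final constant without any exponent bookkeeping; as written, the constant $1/806$ cannot be derived from your argument because the quantities $|A'|\approx N^{1/2-7\epsilon}$ and $N^{1/2+29\epsilon}$ that generate $403\epsilon$ never appear. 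The collinearity configuration of the four auxiliary points and the projective transformation are the essential ideas here, not optional geometric preprocessing.
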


As will be seen, the proof of Theorem \ref{theorem:result} uses finite field sum-product estimates as a `black box'. It is likely that further improvements can be obtained by unpacking the sum-product proof and exploiting its use of multiplicative energy and covering arguments to bypass some costly Balog-Szemer\'edi-Gowers type refinements. 

It should also be remarked that by combining recent sum-product work of Li and Roche-Newton \cite{LRN} with the approach in \cite{me1}, the exponent in Theorem \ref{theorem:result} extends to a general finite field $\mathbb{F}_q$, subject to appropriate non-degeneracy conditions. This is omitted in the interests of simplicity and instead left as an exercise for the motivated reader.

\subsection*{A sketch of the proof}

The proof of Theorem \ref{theorem:result} relies on sum-product estimates over $\mathbb{F}_p$, the most recent of which is that of Rudnev \cite{rudnev}. As Rudnev notes in his paper, the result can be expressed as a `difference-ratio' estimate as well. We will use the following formulation:

\begin{lemma}[Rudnev]\label{theorem:rudnev}
Let $A \subseteq \mathbb{F}_p$ with $|A|<\sqrt{p}$. Then $\max\left\{|A-A|,|A/ A|\right\}\gg |A|^{12/11}$.
\end{lemma}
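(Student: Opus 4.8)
The plan is to recognise this as Rudnev's finite-field sum-product estimate recorded in ``difference-ratio'' form, and to prove it using the same machine Rudnev develops, namely his point-plane incidence theorem in $\mathbb{F}_p^3$. That theorem bounds the number of incidences $I(\mathcal{P},\Pi)$ between $n$ points and $n$ planes (with $n=O(p^2)$) by a statistical main term of order $n^2/p$ together with an error of the shape $n^{3/2}+k\,n$, where $k$ is the largest number of points of $\mathcal{P}$ lying on a single line. The overall scheme is to suppose that $|A-A|$ and $|A/A|$ are both small, to count a single collision statistic built from $A$ in two different ways, and to reach a contradiction once that statistic is forced below what the Cauchy--Schwarz inequality permits.

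First I would reduce the estimate to an energy count. Writing $K=\max\{|A-A|,|A/A|\}$, one fixes a bilinear equation over $A$ whose solution count is simultaneously governed by the difference set and the ratio set. A natural candidate is the number $E$ of sextuples with
\[
(a-b)\,c=(a'-b')\,c',\qquad a,b,a',b',c,c'\in A,
\]
since such a solution encodes both a difference $a-b\in A-A$ and a ratio $c'/c\in A/A$; equivalently it records that the difference-ratio $(a-b)/(a'-b')$ coincides with an element of $A/A$. Grouping solutions by the common value $(a-b)c$, which lies in the set $(A-A)\cdot A$ of size at most $K\,|A|$, and discarding the degenerate solutions with a vanishing factor (negligible under $|A|<\sqrt{p}$), Cauchy--Schwarz produces a lower bound for $E$ that is polynomially large in $|A|$ and strictly decreasing in $K$.

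The heart of the argument is the matching upper bound for $E$, and this is where the incidence theorem enters. One must realise the equation above as a genuine point--plane incidence in three dimensions: the six variables are split into a point, assembled as a triple of coordinates of which one is a product (so that the defining relation becomes linear in the point), and a plane whose coefficients are read off from the remaining variables. The hypothesis $|A|<\sqrt{p}$ then guarantees that the resulting point and plane sets have size $O(p^2)$ and that the statistical term $n^2/p$ does not dominate, placing the configuration in the nontrivial range of Rudnev's theorem. Balancing the lower bound for $E$ against the incidence upper bound, and optimising the parameters, yields $K\gg|A|^{12/11}$.

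I expect the main obstacle to be twofold. The first difficulty is choosing the point/plane construction so that the bilinear relation is captured by a truly three-dimensional incidence rather than a weaker planar one; the strength of the final exponent is entirely due to the three-dimensional bound, so the lift must be set up to exploit it. The second, and more delicate, difficulty is controlling the collinearity parameter $k$, so that the error term $k\,n$ stays below the main contribution -- this typically requires a separate combinatorial estimate on how many points of the configuration can lie on one line, or a dyadic decomposition isolating the ``rich'' lines. Finally, since collinearity is a projective condition that is insensitive to whether the underlying affine relations are written with $+,\times$ or with $-,/$, the very same construction proves the sum-product and the difference-ratio forms at once; this is precisely the remark of Rudnev that the excerpt invokes.
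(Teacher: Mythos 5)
The first thing to note is that the paper does not prove this lemma at all: it is imported verbatim from Rudnev \cite{rudnev} and used as a black box, so there is no in-paper argument to measure your proposal against. Judged on its own terms, your sketch follows the \emph{modern} route to finite-field sum-product estimates, namely Rudnev's point--plane incidence theorem in $\mathbb{F}_p^3$ --- which is not how the cited $12/11$ exponent was actually obtained. The argument in \cite{rudnev} is elementary additive combinatorics (multiplicative energy, covering lemmas, and an additive-pivot argument in the spirit of Bourgain--Glibichuk and Katz--Shen); the point--plane theorem postdates it, and when that machine is run correctly (Roche-Newton, Rudnev and Shkredov) it yields the \emph{stronger} exponent $6/5$. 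So $12/11$ does not ``come out of optimising'' your scheme: the balance either produces $6/5$ or fails, and quoting the target exponent as the output of an unperformed optimisation is the weakest point of the write-up (though harmless for the lemma itself, since $6/5>12/11$).

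There is also a concrete gap in the lifting step. Your energy $E=\#\{(a,b,c,a',b',c')\in A^6:(a-b)c=(a'-b')c'\}$ has the form $f(u)=f(v)$ with $f(a,b,c)=(a-b)c$: each side of the equation depends on only one triple of variables. If you build the point from one triple and the plane from the other, the ``plane'' is the level set $\{z=(a-b)c\}$, which depends only on the single scalar $(a-b)c$; the incidence problem collapses to counting representations of elements of $(A-A)\cdot A$ and the three-dimensional theorem buys nothing. To make the incidence bound bite you must first expand $E=\sum_{\lambda}m(\lambda)\,\#\{a'-b'=\lambda(a-b)\}$ over the ratios $\lambda=c/c'$ (or start from a genuinely bilinear quantity such as the number of solutions of $(a-b)(c'-d')=(a'-b')(c-d)$), and only then apply the weighted point--plane bound, with explicit control of the maximal number of collinear points $k$. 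You flag the collinearity issue, but the degeneracy of the lift is the step that actually breaks the sketch as written. Either repair it along these lines --- accepting that you are then proving the stronger $6/5$ statement --- or simply cite \cite{rudnev} as the paper does.
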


Through Balog-Szemer\'edi-Gowers type refinements, this can be interpreted as follows:

\begin{interpretation}
A point-set in $\mathbb{F}_p^2$ that is efficiently contained in a direct product $A \times B$ with $|A|\approx |B|<\sqrt{p}$  cannot be simultaneously covered by only a few lines through a fixed point and only a few lines with a fixed (non-vertical or horizontal) gradient. Here `few' means less than $|A|^{1+\delta}$ for a small $\delta>0$. 
\end{interpretation}

The proof of Theorem \ref{theorem:result} begins by supposing for a contradiction that $I(P,L)$ is large. By averaging arguments we can find four points $p_1,p_2,p_3,p_4 \in P$ and a large subset $R$ of $P$ that is supported on only a few lines through each of the four $p_i$. Moreover, the points $p_1,p_2,p_3$ are colinear, and the point $p_4$ lies off their common line.

Apply a projective transformation $\tau$ so that lines through $\tau(p_1)$ and $\tau(p_2)$ are parallel to the vertical and horizontal co-ordinate axes respectively. This means that the image $\tau(R)$ is efficiently contained in a direct product $A \times B$. 

Since $\tau$ sends $p_1$ and $p_2$ to the line at infinity, and $p_3$ is colinear with both, $\tau$ must also send $p_3$ to the line at infinity. All lines passing through $\tau(p_3)$ are then parallel, but not parallel to either co-ordinate axis. So $\tau(R)$ is covered by only a few lines, all of some fixed gradient. 

Finally, since $p_4$ is not colinear with the other three points, $\tau(p_4)$ is not on the line at infinity. So $\tau(R)$ is covered by only a few lines, all through some fixed point.

But $\tau(R)$ then satisfies the conditions that were explicitly banned in the interpretation of Lemma \ref{theorem:rudnev}, and so we obtain a contradiction.

\subsection*{Acknowledgements}
The author is grateful to T. Bloom, A. Glibichuk and M. Rudnev for reading an earlier draft, and to O. Roche-Newton and M. Rudnev for conversations.

\section{Proof of Theorem \ref{theorem:result}}

Suppose that $I(P,L)\gg N^{3/2-\epsilon}$. We shall show that $\epsilon\geq \frac{1}{806}-o(1)$.

Without loss of generality we may assume that every point in $P$ is incident to only $O(N^{1/2+\epsilon})$ lines in $L$ and every line in $L$ is incident to only $O(N^{1/2+\epsilon})$ points in $P$. Indeed, if $P_+$ is the set of points incident to at least $C N^{1/2+\epsilon}$ lines then, writing $\delta_{pl}=1$ if $p \in l$ and $0$ otherwise, we have
$$I(P_+,L)=\sum_{p \in P_+}\sum_{l \in L}\delta_{pl}\leq \frac{1}{CN^{1/2+\epsilon}}\sum_{p \in P_+}\sum_{l_1,l_2 \in L}\delta_{pl_1}\delta_{pl_2} \leq \frac{2}{C} N^{3/2+\epsilon}$$
and so $P_+$ can be discarded without affecting incidences by more than a constant. The argument for lines is similar.

\subsection{Partial sum-products}
Our first aim is to show that if $\epsilon$ is small then there must exist large $A,B \subseteq \mathbb{F}_p$ and a large $E \subseteq A \times B$ such that $A\stackrel{E}{-}B$ and $A\stackrel{E}{/}B$ are both small.
This is accomplished by making two iterations of a refinement argument, and then applying a projective transformation.

\subsubsection{First refinement}

Let $P_1$  be the set of points in $P$ incident to $\Omega\left(N^{1/2-\epsilon}\right)$ lines in $L$.
By a dyadic pigeonholing we may find a subset $P_1'\subseteq P_1$ and an integer $K$ with $$N^{1/2- \epsilon}\ll K \ll N^{1/2+\epsilon}$$ such that every point in $P_1'$ is incident to between $K$ and $2K$ lines in $L$, and $I(P_1',L)\approx|P_1'|K \approx N^{3/2-\epsilon}$. 

Now let $L_1$ be the set of lines in $L$ incident to $\widetilde{\Omega}\left(N^{1/2-\epsilon}\right)$ points in $P_1'$, and
 $P_2$ be the set of points in $P_1'$ incident to $\widetilde{\Omega}\left(K\right)$ lines in $L_1$.

Note that $I(P_2,L_1)\approx I(P_1',L_1)\approx I(P_1',L)\approx N^{3/2-\epsilon}$ because in each case we can pick constants to ensure that only a fraction of the $N^{3/2-\epsilon}$ incidences are discarded. Note also that $|P_2|\gtrsim N^{1-2\epsilon}$ and in particular that $P_2$ has at least say 100 elements.

For each $p \in P_2$ let $P_p=\left\{q \in P_1':l_{pq}\in L_1\right\}$. Since $p$ is incident to $\widetilde{\Omega}(K)$ lines in $L_1$ and each line is incident to at least $\widetilde{\Omega}(N^{1/2-\epsilon})$ points in $P_1'$ we have $|P_p|\gtrsim K N^{1/2-\epsilon}$. Summing over all such $p$ and applying Cauchy-Schwartz gives
\begin{equation}\label{eq:CS}
K N^{1/2-\epsilon}|P_2|\lesssim \sum_{p \in P_2}|P_2|
\leq |P_1|^{1/2}\left(\sum_{p_1,p_2}\left|P_{p_1}\cap P_{p_2}\right|\right)^{1/2}
\end{equation}
and so 
$$\frac{K^2 |P_2|^2}{N^{2\epsilon}}  \lesssim \sum_{p_1,p_2 \in P_2}\left|P_{p_1}\cap P_{p_2}\right|$$
Now, $\sum_{p \in P_2}|P_p| \leq |P_2|N$, and this is certainly far less that $\frac{K^2 |P_2|^2}{N^{2\epsilon}}$ or we are already done. So we may assume that the summation on the right is over distinct pairs $p_1,p_2$. There then exist distinct $p_1,p_2$ with $\left|P_{p_1}\cap P_{p_2}\right|\gtrsim K^2/N^{2 \epsilon}$. 

\subsubsection{Second refinement}

Define $Q=P_{p_1}\cap P_{p_2}$, so that (passing to a subset if necessary) $$|Q|\approx K^2/N^{2\epsilon}$$ and $$I(Q,L)\approx K^3/N^{2\epsilon}.$$ Moreover, note that $Q$ is supported over only $O(K)$ lines through each of $p_1$ and $p_2$.

We now build a similar argument for the set $Q$. We let $J$ be the set of lines in $L$ incident to $\widetilde{\Omega}\left(K^3/N^{1+2\epsilon}\right)$ points in $Q$, and $Q_1$ be the set of points in $Q$ incident to $\widetilde{\Omega}\left(K\right)$ lines in $J$. Note that $I(Q_1,J)\approx I(Q,J)\approx I(Q,L)\approx K^3/N^{2\epsilon}$.

Points in $Q_1$ are supported over $O(K)$ lines through $p_1$, each incident to $O(N^{1/2+\epsilon})$ points, and we also know $|Q_1|\gtrsim N^{1-6 \epsilon}$. So there are several (in fact $\widetilde{\Omega}(N^{1/2-7\epsilon})$) lines in $J$ that are each incident to $p_1$ and also $\widetilde{\Omega}(N^{1/2-7\epsilon})$ points in $Q$. Pick one such line that is not incident to $p_2$. Let $Q_2$ be the set of points from $Q_1$ on that line.

For each $p \in Q_2$ let $Q_p=\left\{q \in Q:l_{pq}\in J\right\}$, so that $|Q_p|\gtrsim K^4/N^{1+2 \epsilon}$. We have 

$$\frac{K^4}{N^{1+2\epsilon}}|Q_2|\lesssim \sum_{p \in Q_2}|Q_p|$$

and so applying Cauchy Schwartz as in (\ref{eq:CS}) we find distinct $p_3,p_4 \in Q_2$, not equal to $p_1,p_2$, such that $|Q_{p_3}\cap Q_{p_4}|\gtrsim \frac{K^8}{N^{2+4\epsilon}|Q|}\approx  \frac{K^6}{N^{2+2\epsilon}}$. Define $R= Q_{p_3}\cap Q_{p_4}$ so that $$|R|\gtrsim K^6/N^{2+2 \epsilon}$$ and $R$ is supported on $O(K)$ lines through each of $p_1,p_2,p_3,p_4$.

\subsubsection{Projective transformation}

Since $p_3,p_4 \in Q_2$ they are colinear with $p_1$ but not $p_2$. Pick a projective transformation $\tau$ that sends their common line to the line at infinity, in such a way that lines through $p_3$ and $p_4$ are parallel to the horiztonal and vertical co-ordinate axes respectively. If we define $E=\tau(R)$ then this means that $E \subseteq A \times B$ for $A,B \subseteq \mathbb{F}_p$. By appending points if neccesary we may assume $$|A|,|B| \approx K.$$ And by discarding points we may assume that $$|E|\approx K^6/N^{2+2 \epsilon}.$$

Since $p_1$ is colinear with $p_3$ and $p_4$, lines through $\tau(p_1)$ are all parallel, but not parallel to the vertical or horizontal axes. So there exists $z \notin \left\{0,\infty\right\}$ such that $E$ is supported on only $O(K)$ lines of gradient $z$. 

Since $p_2$ is not colinear with $p_3$, the map $\tau$ sends it to the affine plane, and so $E$ is supported on $O(K)$ lines all passing through $\tau(p_2)$.

Note that these properties are invariant under translation of the plane and scaling of a single co-ordinate axis in the following sense. First, if $T$ is some translation of the plane then $T(E)$ is supported on $O(K)$ lines through $T(\tau(p_2))$, and still covered by $O(K)$ lines of gradient $z$. So we may assume that $\tau(p_2)$ is the origin. 

Second, if $\delta_{\lambda}$ is the transformation that sends $(x,y)$ to $(x,\lambda y)$, then $(x,y)$ lies on the line $y=mx+c$ if and only if $\delta_{\lambda}(x,y)$ lies on the line $y=(\lambda m) x+ \lambda c$. So $\delta_{\lambda}(E)$ is supported on $O(K)$ lines of gradient $\lambda z$ and $O(K)$ lines through the origin. By taking $\lambda = 1/z$ we may assume therefore that $z=1$. 

So without loss of generality we may assume that $E$ is supported on $O(K)$ lines through the origin and $O(K)$ lines of gradient 1. In other words
$$\left|A \stackrel{E}{/}B\right|,\left|A \stackrel{E}{-}B\right|\ll K.$$

\subsection{Concluding the proof}

We want to apply sum-product (or strictly speaking, difference-ratio) estimates to the \textit{partial} difference and ratio sets that we have just constructed. To do this we need to work with \textit{complete} difference and ratio sets. The traditional machinery for making such a transition is the Balog-Szemer\'edi-Gowers theorem. We employ a variation, itself a straightforward modification of one due to Bourgain and Garaev \cite{bg}:  

\begin{lemma}\label{theorem:bg}
Let $A,B \subseteq \mathbb{F}_p$ and $E \subseteq A \times B$. Then there exists $A' \subseteq A$ with $|A'|\gg \frac{|E|}{|B|}$ and $\frac{|A'-A'|}{\left|A\stackrel{E}{-}B\right|^4},\frac{|(A'/A'|}{\left|A\stackrel{E}{/}B\right|^4}\ll \frac{|A|^4|B|^3}{|E|^5}$.
\end{lemma}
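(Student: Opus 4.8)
The plan is to run a Balog--Szemer\'edi--Gowers argument on the bipartite graph $E$, counting paths of length four between two vertices of $A$ and converting an abundance of such paths into control on the full difference and ratio sets of a suitable subset $A'$.

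Write $N(a)=\{b\in B:(a,b)\in E\}$ and $N(b)=\{a\in A:(a,b)\in E\}$, and abbreviate $D=A\stackrel{E}{-}B$ and $R=A\stackrel{E}{/}B$. The basic observation is that a single path $a_1\to b_1\to a_3\to b_2\to a_2$ in $E$ (that is, with $(a_1,b_1),(a_3,b_1),(a_3,b_2),(a_2,b_2)\in E$) simultaneously witnesses
\[
a_1-a_2=(a_1-b_1)-(a_3-b_1)+(a_3-b_2)-(a_2-b_2),
\]
so that $a_1-a_2\in D-D+D-D$, and $a_1/a_2=(a_1/b_1)(a_3/b_1)^{-1}(a_3/b_2)(a_2/b_2)^{-1}\in RR^{-1}RR^{-1}$. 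For fixed endpoints $a_1,a_2$ the map sending a path to the quadruple $(a_1-b_1,a_3-b_1,a_3-b_2,a_2-b_2)\in D^4$ is injective, since the quadruple determines $a_3$, then $b_1$, then $b_2$; likewise for the ratio quadruple in $R^4$. Thus each path yields a \emph{distinct} representation of $a_1-a_2$ as an alternating sum of four elements of $D$, and of $a_1/a_2$ as an alternating product of four elements of $R$. This is exactly the feature that lets one set $A'$ govern both operations at once.

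First I would show, by two applications of the Cauchy--Schwarz inequality (once over $A$ and once over $B$, using $\sum_b |N(b)|\ge |E|$), that the total number of length-four paths is $\gg |E|^4/(|A||B|^2)$, so that an average pair of vertices of $A$ is joined by $\gg |E|^4/(|A|^3|B|^2)$ paths. The crux is then the refinement step: I would extract $A'\subseteq A$ with $|A'|\gg |E|/|B|$ such that \emph{every} pair $(a_1,a_2)\in A'\times A'$ is joined by at least $r\gg |E|^5/(|A|^4|B|^3)$ paths, i.e. all pairs attain a fixed fraction of the density times the average. Upgrading the easy ``most pairs'' count to this ``every pair'' guarantee, with precisely this exponent, is the step I expect to be the main obstacle, and it is what the Balog--Szemer\'edi--Gowers machinery of Bourgain and Garaev supplies; the pitfall is that a naive popularity-and-pruning argument bleeds a further power of the density and misses the target by a factor of order $|E|^2/(|A||B|^2)$.

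Granting the refinement, the conclusion is a routine double count. For each $s\in A'-A'$ fix one representing pair $(a_1,a_2)\in A'\times A'$; its $\ge r$ paths produce $\ge r$ distinct quadruples in $D^4$ whose alternating sum is $s$, and since each quadruple determines its alternating sum, distinct values of $s$ receive disjoint families of quadruples. Hence $r\,|A'-A'|\le |D|^4$, which rearranges to
\[
\frac{|A'-A'|}{\bigl|A\stackrel{E}{-}B\bigr|^4}\ll\frac{|A|^4|B|^3}{|E|^5}.
\]
Re-running the identical count for the \emph{same} $A'$ with the ratio quadruples gives $r\,|A'/A'|\le |R|^4$ and hence the companion bound for $A\stackrel{E}{/}B$, completing the proof. (Any elements equal to $0$ may be discarded at the outset, so that division is everywhere defined, without affecting the cardinalities by more than a constant.)
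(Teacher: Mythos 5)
Your approach is the paper's: count paths of length four in the bipartite graph $E$, observe that each path between $a_1$ and $a_2$ injects into a quadruple of elements of $A\stackrel{E}{-}B$ (resp.\ $A\stackrel{E}{/}B$) with alternating sum $a_1-a_2$ (resp.\ alternating product $a_1/a_2$), and double count over $A'-A'$ and $A'/A'$. Your endgame is correct and matches the paper's, and your explicit check that the path-to-quadruple map is injective for fixed endpoints is a point the paper leaves implicit but genuinely needs. Your preliminary Cauchy--Schwarz count of $\gg |E|^4/(|A||B|^2)$ total paths is also right.

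The one thing you do not do is prove the step you yourself flag as the crux: producing $A'\subseteq A$ with $|A'|\gg |E|/|B|$ in which \emph{every} pair is joined by $\gg |E|^5/(|A|^4|B|^3)$ paths. You invoke this as the ``machinery of Bourgain and Garaev,'' which is a defensible citation (the paper describes its lemma as a straightforward modification of one of theirs), but as written your proof has a hole exactly where the content is. The paper fills it in two moves: a BSG-type lemma quoted from Tao and Vu gives $A'$ with $|A'|\gg |E|/|B|$ in which at least $0.9|A'|^2$ of the pairs have $\gg |E|^2/(|A|^2|B|)$ common neighbours; a popularity pigeonhole then passes to $A''\subseteq A'$ with $|A''|\gg|A'|$ such that any $a_1,a_2\in A''$ share $\gg |A'|$ good partners $c\in A'$, each contributing $\gg \bigl(|E|^2/(|A|^2|B|)\bigr)^2$ paths $a_1\to b_1\to c\to b_2\to a_2$, and multiplying by $|A'|\gg|E|/|B|$ gives precisely $\gg |E|^5/(|A|^4|B|^3)$. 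In particular your worry that a ``naive popularity-and-pruning argument bleeds a further power of the density'' is unfounded: once the Tao--Vu lemma is granted, popularity pruning is exactly the paper's argument and it attains the stated exponent with no loss.
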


A proof of Lemma \ref{theorem:bg} is given in the appendix. To apply Lemma \ref{theorem:bg} we note that since $K\gg N^{1/2-\epsilon}$ we have
$$\frac{|E|}{|B|}\approx \frac{K^5}{N^{2+2 \epsilon}}\gg N^{1/2-7 \epsilon}$$
and 
$$\frac{|A|^4|B|^3\left|A\stackrel{E}{-}B\right|^4}{|E|^5}, \frac{|A|^4|B|^3\left|A\stackrel{E}{/}B\right|^4}{|E|^5} \ll \frac{N^{10+10\epsilon}}{K^{19}}  \ll N^{1/2+29 \epsilon}.$$

So there exists $A' \subseteq A$ with $|A'|\gtrsim N^{1/2-7 \epsilon}$ and $|A'-A'|,|A'/A'|\lesssim N^{1/2+29\epsilon}$. Applying Lemma \ref{theorem:rudnev} to $A'$ yields $N^{1/2}\lesssim N^{403 \epsilon}$ and so $\epsilon \geq \frac{1}{806}-o(1)$, which completes the proof of Theorem \ref{theorem:result}.

\section*{Appendix: Proof of Lemma \ref{theorem:bg}}

The proof of Lemma \ref{theorem:bg} follows the approach in \cite{bg} closely, making only a slight technical modification. The following lemma can be found in the book of Tao and Vu \cite{TV}:

\begin{lemma}
Let $E \subseteq A \times B$. Then for any $\epsilon >0$ there exists $A' \subseteq A$ with $|A'|\gg \frac{|E|}{|B|} $ such that for at least $(1-\epsilon)|A'|^2$ of the pairs $(a_1',a_2') \in A'\times A'$ we have 

$$\left|N(a_1')\cap N(a_2')\right|\geq \frac{\epsilon |E|^2}{2|A|^2|B|}$$

where $N(a)$ is the set of $b\in B$ for which $(a,b) \in E$.
\end{lemma}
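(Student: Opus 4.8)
The plan is to prove this by the dependent random choice method, using a single randomly selected vertex on the $B$-side. Concretely, I would choose $b \in B$ uniformly at random and set $A' = A'(b) := \{a \in A : (a,b)\in E\}$, the set of neighbours of $b$ in $A$. The two quantities I need to control for the chosen $b$ are $|A'|$ and the number of pairs $(a_1,a_2)\in A'\times A'$ that are \emph{bad}, meaning $|N(a_1)\cap N(a_2)| < \frac{\epsilon|E|^2}{2|A|^2|B|}$. Both are governed by the elementary identity $\Pr[a_1,a_2\in A'] = |N(a_1)\cap N(a_2)|/|B|$, which holds because $a_1,a_2\in A'$ precisely when the random $b$ lies in $N(a_1)\cap N(a_2)$.

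First I would estimate the second moment. Expanding the square gives $\mathbb{E}|A'|^2 = \sum_{a_1,a_2}\Pr[a_1,a_2\in A'] = \frac{1}{|B|}\sum_{a_1,a_2}|N(a_1)\cap N(a_2)|$; since the inner sum also equals $\sum_{b}|A'(b)|^2$, a single application of Cauchy--Schwarz against $\sum_b|A'(b)| = |E|$ yields $\mathbb{E}|A'|^2 \geq |E|^2/|B|^2$. Next I would bound the expected number of bad pairs surviving into $A'$: summing the same probability over bad pairs, each contributes strictly less than $\frac{\epsilon|E|^2}{2|A|^2|B|^2}$, and there are at most $|A|^2$ of them, so the expected number of bad pairs in $A'\times A'$ is strictly less than $\frac{\epsilon|E|^2}{2|B|^2}$.

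The crux is to extract a single $b$ for which \emph{both} conclusions hold simultaneously, and I would do this by considering the combined random variable $Z := |A'|^2 - \epsilon^{-1}\,\#\{\text{bad pairs in } A'\times A'\}$. By the two estimates above, $\mathbb{E} Z \geq |E|^2/|B|^2 - |E|^2/(2|B|^2) = |E|^2/(2|B|^2) > 0$, so some choice of $b$ achieves $Z \geq |E|^2/(2|B|^2)$. Fixing this $b$: since the bad-pair count is non-negative we have $Z \leq |A'|^2$, whence $|A'|^2 \geq |E|^2/(2|B|^2)$ and therefore $|A'| \gg |E|/|B|$ as required; and since $Z \geq 0$ we obtain $\#\{\text{bad pairs}\} \leq \epsilon|A'|^2$, so at least $(1-\epsilon)|A'|^2$ of the pairs are good, which is exactly the stated conclusion.

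I expect the only genuine subtlety to be the calibration of the weight $\epsilon^{-1}$ in $Z$ against the factor $\tfrac12$ in the threshold: these must be chosen so that the size bound (coming from non-negativity of the bad-pair count together with the second-moment lower bound) and the pair bound (coming from $Z\geq 0$) both drop out of the single event $Z \geq \mathbb{E} Z$. Everything else is a routine first- and second-moment computation, and the argument follows the treatment in Tao--Vu closely.
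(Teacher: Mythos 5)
Your argument is correct: the identity $\Pr[a_1,a_2\in A'(b)]=|N(a_1)\cap N(a_2)|/|B|$, the Cauchy--Schwarz lower bound $\mathbb{E}|A'|^2\geq |E|^2/|B|^2$, the bad-pair expectation bound, and the extraction of a single $b$ via $Z=|A'|^2-\epsilon^{-1}\#\{\text{bad pairs}\}$ all check out, and the two conclusions do drop out of the single event $Z\geq\mathbb{E}Z$ as you claim. The paper itself gives no proof of this lemma (it is quoted from the book of Tao and Vu), and your dependent random choice argument is essentially the standard proof found there, so there is nothing further to compare.
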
 

We apply it to obtain:

\begin{lemma}
Let $G(A,B,E)$ be a bipartite graph. Then there exists $A'\subseteq A$ with $|A'|\gg \frac{|E|}{|B|}$ such that every pair of elements from $A' \times A'$ is connected by $\Omega\left(\frac{|E|^5}{|A|^4|B|^3}\right)$ paths of length four in $E$.
\end{lemma}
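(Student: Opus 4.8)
The plan is to bootstrap the previous lemma, which guarantees that \emph{most} pairs in a suitable subset have large common neighbourhood, into a statement about \emph{every} pair, by counting length-four paths that pass through a well-chosen intermediate vertex. Throughout I write $N(a)$ for the neighbourhood of $a$ in $E$, and I record the basic observation that the number of paths of length four joining $a_1$ to $a_2$ through a fixed intermediate vertex $a \in A$ is exactly $|N(a_1)\cap N(a)|\,|N(a)\cap N(a_2)|$, since such a path has the shape $a_1 - b_1 - a - b_2 - a_2$ with $b_1 \in N(a_1)\cap N(a)$ and $b_2 \in N(a)\cap N(a_2)$. Restricting the intermediate vertex to a subset of $A$ can only undercount, so it suffices to produce many good intermediate vertices.

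First I would apply the previous lemma with a small fixed constant $\epsilon$ (taking $\epsilon = 1/16$ will suffice) to obtain a set $A_0 \subseteq A$ with $|A_0| \gg |E|/|B|$ such that, calling a pair $(a_1,a_2)$ \emph{bad} when $|N(a_1)\cap N(a_2)| < \mu := \frac{\epsilon|E|^2}{2|A|^2|B|}$, there are at most $\epsilon|A_0|^2$ bad ordered pairs in $A_0 \times A_0$.

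The key step is to pass from ``few bad pairs in total'' to ``few bad partners for each surviving vertex''. Since the sum over $a \in A_0$ of the number of bad partners of $a$ equals the number of ordered bad pairs, which is at most $\epsilon|A_0|^2$, a Markov-type count shows that at most $4\epsilon|A_0|$ vertices have more than $|A_0|/4$ bad partners. Deleting these \emph{heavy} vertices leaves $A' \subseteq A_0$ with $|A'| \geq \tfrac34|A_0| \gg |E|/|B|$, and every $a \in A'$ now has at most $|A_0|/4$ bad partners in $A_0$. Finally I would count paths: fixing any pair $(a_1,a_2) \in A' \times A'$, since neither $a_1$ nor $a_2$ is heavy, at most $|A_0|/4 + |A_0|/4$ vertices $a \in A'$ make either $(a_1,a)$ or $(a,a_2)$ bad, so at least $|A'| - |A_0|/2 \geq \tfrac14|A_0| \gg |E|/|B|$ intermediate vertices leave both pairs good. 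Each such $a$ contributes at least $\mu^2$ paths, and multiplying gives $\gg \frac{|E|}{|B|}\mu^2 = \Omega\!\left(\frac{|E|^5}{|A|^4|B|^3}\right)$ paths, as required.

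The main obstacle is the heavy-vertex removal: one must choose $\epsilon$ small enough that deleting the vertices with many bad partners simultaneously retains $|A'| \gg |E|/|B|$ and forces a \emph{uniform} bound on the number of bad partners of every survivor. Without this uniformity the lemma from the previous step only controls pairs on average, which is insufficient to guarantee a path count for \emph{every} pair. Once the uniform bound is in place, concatenating two good pairs through a shared intermediate vertex and reading off the product of the two common-neighbourhood sizes is routine.
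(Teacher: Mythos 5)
Your proposal is correct and follows essentially the same strategy as the paper: apply the preceding lemma with a fixed small $\epsilon$, use a Markov/pigeonhole step to upgrade ``most pairs have large common neighbourhood'' to ``every surviving pair has $\gg |E|/|B|$ common good intermediate vertices,'' and then count length-four paths $a_1 - b_1 - a - b_2 - a_2$ through those intermediates, each contributing $\gg \left(\frac{|E|^2}{|A|^2|B|}\right)^2$ paths. The only cosmetic difference is that you delete the vertices with many \emph{bad} partners, whereas the paper selects the vertices with many \emph{good} partners and intersects the sets $I_{a_1}\cap I_{a_2}$; the resulting bounds are identical.
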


\begin{proof}
Say that $(a_1,a_2)\in A \times A$ is \textbf{good} if
$$\left|N(a_1)\cap N(a_2)\right|\geq 0.05 \frac{|E|^2}{|A|^2|B|}.$$ 
By the previous lemma we can find $|A'|\gg \frac{|E|}{|B|}$ such that $0.9 |A'|^2$ of pairs from $A'$ are good.

Given $a_1'\in A'$ denote by $I_{a_1'}$ the set of elements $a_2' \in A'$ for which $(a_1',a_2')$ is good. Then we have

$$\sum_{a' \in A'}|I_{a'}|\geq 0.9 |A'|^2.$$

So by popularity pigeonholing there exists $|A''|\gg|A'|$ such that $|I_a|\geq 0.7|A'| $ for every $a \in A''$. So for any pair $a_1,a_2 \in A''$ we have $|I_{a_1}\cap I_{a_2}|\gg |A'|$ and so there are $\Omega(|A'|)$ elements $c \in A'$ for which
$$|N(a_1)\cap N(c)|,|N(a_2)\cap N(c)|\gg \frac{|E|^2}{|A|^2|B|}.$$ This means that there are $\Omega \left(\frac{|E|^5}{|A|^4|B|^3}\right)$ paths of length four in $E$ connecting $a_1$ and $a_2$, as required. 
\end{proof}

To prove Lemma \ref{theorem:bg} we note that for each $\alpha,\beta \in A'$ we have, 
$$\#\left\{(b_1,a,b_2)\in B \times A \times B:(\alpha,b_1),(a,b_1),(a_1,b_2),(\beta,b_2)\in E\right\}\gg \frac{|E|^5}{|A|^4|B|^3}.$$

Now it is clear that 
\begin{align*}
\alpha-\beta&= (\alpha-b_1)-(c-b_1)+(a-b_2)-(\beta-b_2)\\ 
\frac{\alpha}{\beta}&=\frac{\alpha}{b_1}\cdot \frac{b_1}{a} \cdot \frac{a}{b_2}\cdot \frac{b_2}{\beta}.
\end{align*}

So for all $\alpha,\beta \in A'$ we have

$$\#\left\{(s,t,u,v)\in A\stackrel{E}{-}B:s-t+u-v=\alpha-\beta\right\}\gg\frac{|E|^5}{|A|^4|B|^3}  $$ 
$$\#\left\{(s,t,u,v)\in A\stackrel{E}{\backslash}B:\frac{st}{uv}=\frac{\alpha}{\beta}\right\}\gg \frac{|E|^5}{|A|^4|B|^3}.$$

Summing over all elements of $A'-A'$ and $A'/A'$ respectively, we obtain

$$\frac{|E|^5}{|A|^4|B|^3}|A'-A'|\ll\left|A \stackrel{E}{-}B \right|^4 $$
$$\frac{|E|^5}{|A|^4|B|^3}|A'/A'|\ll\left|A \stackrel{E}{/}B\right|^4. $$

Rearranging gives the statment of Lemma \ref{theorem:bg}.

\bibliographystyle{plain}
\bibliography{Incidencesbibliography}

\end{document}